\documentclass[11pt]{article}

\usepackage[utf8]{inputenc}
\usepackage[T1]{fontenc}
\usepackage[top=2.5cm,bottom=2.5cm,left=2.5cm,right=2.5cm]{geometry}
\usepackage{amsmath,amssymb,amsthm}
\usepackage[colorlinks=true,linkcolor=red,citecolor=blue,urlcolor=magenta]{hyperref}
\allowdisplaybreaks[4]

\newtheorem{theorem}{Theorem}[section]
\newtheorem{lemma}[theorem]{Lemma}
\newtheorem{claim}[theorem]{Claim}
\newtheorem{corollary}[theorem]{Corollary}
\newtheorem{conjecture}[theorem]{Conjecture}
\newtheorem{remark}[theorem]{Remark}
\newcommand{\ignore}[1]{}

\begin{document}

\title{Tight connectivity and shadow densities in generalized Erd\H{o}s--Rogers problems}
\date{}

\author{
Lulu Dai\footnote{Center for Discrete Mathematics, Fuzhou University,
Fuzhou 350108, P.~R.~China. Email: \texttt{1415088965@qq.com}.}
\;\; and \;\;
Qizhong Lin\footnote{Center for Discrete Mathematics, Fuzhou University,
Fuzhou 350108, P.~R.~China. Email: \texttt{linqizhong@fzu.edu.cn}.
Supported in part by the National Key R\&D Program of China (Grant No.~2023YFA1010202)
and the NSFC (No.~12571361).}
}
\maketitle

\begin{abstract}
Let \(F\) and \(G\) be \(r\)-uniform hypergraphs, and let
\(f_{F,G}(n)\) be the largest integer \(m\) such that every \(n\)-vertex
\(G\)-free \(r\)-graph contains an induced \(F\)-free subgraph on \(m\)
vertices. We prove that, for \(r\ge3\) and \(2\le k\le r-1\), if \(F\) is
nonempty, \(G\) is \(k\)-tightly connected, and there is no homomorphism
from \(G\) to \(F\) (that is, \(G\not\to F\)), then
\[
        f_{F,G}(n)\le C(\log n)^{\beta_F^{(k)}},
        \qquad
        \beta_F^{(k)}=
        \max_{\emptyset\ne P\subseteq\partial_kF}
        \frac{e(P)}{v(P)-1}.
\]
The case \(r=3\) of our result resolves a conjecture of He and Nie.
As a consequence, we obtain the Ramsey lower bound
\(r(G,K_n^r)\ge2^{\Omega\bigl(n^{(r-1)/\binom rk}\bigr)}\)
for every \(k\)-tightly connected non-\(r\)-partite \(r\)-graph \(G\).
This extends a result of Conlon, Fox, Gunby, He, Mubayi, Suk, Verstra\"ete and Yu from the \(3\)-uniform setting.

\end{abstract}

\section{Introduction}

Given two \(r\)-uniform hypergraphs, or \(r\)-graphs, \(F\) and \(G\), let
\(f_{F,G}(n)\) be the largest integer \(m\) such that every \(n\)-vertex
\(G\)-free \(r\)-graph contains a set of \(m\) vertices whose induced
subgraph is \(F\)-free. The classical off-diagonal Ramsey problem is the
special case \(F=K_r^r\), where \(K_r^r\) denotes a single \(r\)-edge. In
this case \(f_{K_r^r,G}(n)\) is the minimum independence number among all
\(n\)-vertex \(G\)-free \(r\)-graphs and is inverse, in the usual sense, to
\(r(G,K_n^r)\).

For graphs, Erd\H{o}s and Rogers~\cite{ER} initiated the study of the more general
functions \(f_{K_s,K_t}(n)\), which are now known as Erd\H{o}s--Rogers
functions. These problems have attracted substantial attention. Among them, the adjacent
case \(t=s+1\) is particularly central and has a long history; it has subsequently been studied by
Bollob\'as and Hind~\cite{BollobasHind}, Krivelevich~\cite{Krivelevich94,Krivelevich95},
Alon and Krivelevich~\cite{AlonKrivelevich}, Dudek and R\"odl~\cite{DudekRodl},
Wolfovitz~\cite{Wolfovitz}, Dudek, Retter and R\"odl~\cite{DudekRetterRodl}, Dudek and Mubayi~\cite{DudekMubayi}, and Mubayi and Verstra\"ete~\cite{MubayiVerstraeteClassical}.
The case \(s=2\) belongs to classical off-diagonal Ramsey theory. The
results of Ajtai, Koml\'os and Szemer\'edi~\cite{AKS} and
Kim~\cite{Kim} give
$
        f_{K_2,K_3}(n)
        =
        \Theta\bigl(\sqrt{n\log n}\bigr).
$
Together with the work of Mattheus and
Verstra\"ete~\cite{MattheusVerstraete} for \(t=4\) and
Brada\v{c}~\cite{Bradac} for \(t\ge5\), these results show more generally
that, for every fixed \(t\ge3\),
$
        f_{K_2,K_t}(n)
        =
        n^{1/(t-1)+o(1)},
$
or equivalently,
\(r(K_t,K_m)=m^{t-1+o(1)}\).

For the adjacent cases with \(s\ge3\), Morris, Sahasrabudhe and
Verstra\"ete~\cite{MSV26} recently proved an upper bound which, together
with a lower bound that follows from a theorem of Joret, Micek, Reed and
Smid~\cite{JMRS}, yields
\[
        f_{K_s,K_{s+1}}(n)
        =
        \Theta\bigl(\sqrt{n\log n}\bigr)
\]
for every fixed \(s\ge3\).
Beyond the adjacent case, the first non-adjacent case \(t=s+2\) was
studied by Sudakov~\cite{Sudakov} and Janzer and
Sudakov~\cite{JanzerSudakov}. A further direction concerns arbitrary
graph pairs, which have also been studied extensively; see, for example,
\cite{MubayiVerstraeteArbitrary,BaloghChenLuo,NenadovSparseInduced,
GishbolinerJanzerSudakov}. Thus the graph case provides the classical
polynomial-scale benchmark against which the hypergraph problem should
be viewed.

For \(3\)-graphs, however, the behaviour changes dramatically. The
hypergraph analogue of the Erd\H{o}s--Rogers problem was first investigated
by Dudek and Mubayi~\cite{DudekMubayi}, initiating the systematic study of
these functions in uniform hypergraphs. The case \(s=3\) is special, since \(K_3^3\) consists of a single edge and hence \(f_{K_3^3,K_4^3}(n)\) is the minimum independence number of an \(n\)-vertex \(K_4^3\)-free \(3\)-graph. Equivalently, it is inverse to the off-diagonal Ramsey number \(r(K_4^3,K_m^3)\). Results of Conlon, Fox and Sudakov~\cite{ConlonFoxSudakovHypergraphRamsey} therefore give
\[
 c_1\left(\frac{\log n}{\log\log n}\right)^{1/2}
 \le f_{K_3^3,K_4^3}(n)
 \le c_2\frac{\log n}{\log\log n}.
\]

For every fixed \(s\ge4\), Dudek and Mubayi~\cite{DudekMubayi} proved
\[
 c_1(\log n)^{1/4}
 \left(\frac{\log\log n}{\log\log\log n}\right)^{1/2}
 \le f_{K_s^3,K_{s+1}^3}(n)
 \le c_2\log n.
\]
Conlon, Fox and Sudakov~\cite{ConlonFoxSudakovShortProofs} improved the lower
bound to \((\log n)^{1/3-o(1)}\), while Lin and Niu~\cite{LinNiu} recently
improved the upper bound to \(O(\log n/\log\log n)\). This transition from
polynomial to polylogarithmic behaviour becomes even more pronounced in
higher uniformities.

Indeed, iterated-logarithmic upper bounds appear in several adjacent
Erd\H{o}s--Rogers problems. Mubayi and Suk~\cite{MubayiSuk} proved that,
for every \(r\ge14\),
\[
        f_{K_{r+1}^r,K_{r+2}^r}(n)
        =
        O\bigl(\log_{(r-13)}n\bigr),
\]
where, as usual, \(\log_{(1)}x=\log x\) and
\(\log_{(j+1)}x=\log(\log_{(j)}x)\). Fan, Hu, Lin and
Lu~\cite{FanHuLinLu} subsequently improved this to
$
        f_{K_{r+1}^r,K_{r+2}^r}(n)
        =
        O\bigl(\log_{(r-3)}n\bigr)
$
for every \(r\ge5\).
In a different parameter range, Du, Hu, Liu and
Wang~\cite{DuHuLiuWangStep} proved that, for every fixed \(r\ge4\) and
every \(t\ge r+7\),
\[
        f_{K_{r+1}^r,K_t^r}(n)
        =
        \bigl(\log_{(r-2)}n\bigr)^{\Theta(1)}.
\]
They also obtained further related results in~\cite{DuHuLiuWangNote}.
More recently, Lin and Niu~\cite{LinNiu} proved, among other results, that
\[
        f_{K_s^4,K_{s+1}^4}(n)
        =
        (\log n)^{o(1)}
\]
for every fixed \(s\ge4\), while for every \(r\ge5\),
$
        f_{K_{r+1}^r,K_{r+2}^r}(n)
                =
        \bigl(\log_{(r-3)}n\bigr)^{o(1)}.
$
These developments reveal an increasingly sparse, iterated-logarithmic
regime for adjacent Erd\H{o}s--Rogers functions as the uniformity grows.

The preceding results concern highly structured clique-type pairs. To
understand the possible scales in the general Erd\H{o}s--Rogers setting, it
is useful to first consider a general lower bound. Since every independent
set is \(F\)-free whenever \(F\) is nonempty, we always have
\[
        f_{F,G}(n)
        \ge
        \max\bigl\{m:r(G,K_m^r)\le n\bigr\}.
\]
Thus the lower bound for \(f_{F,G}(n)\) is controlled by the inverse growth
of the off-diagonal Ramsey number \(r(G,K_m^r)\). In particular, the
standard upper bound of Erd\H{o}s and Rado \cite{ER52} implies that, for every fixed
\(r\)-graph \(G\) and every nonempty \(r\)-graph \(F\),
\[
        f_{F,G}(n)
        \ge
        \bigl(\log_{(r-2)}n\bigr)^{\Omega_G(1)}
        \qquad (r\ge4).
\]
On the other hand, if \(G\) has a single-exponential off-diagonal Ramsey
bound of the form
\[
        r(G,K_m^r)
        \le
        \exp\bigl(Cm^a(\log m)^b\bigr),
\]
then the above inversion gives
\[
        f_{F,G}(n)
        \ge
        c\left(
        \frac{\log n}{(\log\log n)^b}
        \right)^{1/a}.
\]
Hence, for such hypergraphs \(G\), the natural lower bound is already
polylogarithmic. More broadly, this illustrates the close connection
between the growth of generalized Erd\H{o}s--Rogers functions and the
problem of determining how structural properties of a fixed hypergraph
\(G\) govern the off-diagonal Ramsey number \(r(G,K_m^r)\), a direction
recently studied by Conlon, Fox, Gunby, He, Mubayi, Suk, Verstra\"ete and
Yu~\cite{ConlonFoxGunbyHeMubayiSukVerstraeteYu}.

These two possibilities illustrate that higher uniformity alone does not
determine the scale of an Erd\H{o}s--Rogers function. Indeed, such
polylogarithmic behaviour is not universal, even among the hypergraphs
considered in this paper. For example, when \(r=4\), the pair
\(F=K_4^4\) and \(G=K_5^4\) satisfies the structural assumptions below, but
the double-exponential lower bound for \(r(K_5^4,K_m^4)\) obtained by
Du, Hu, Liu and Wang~\cite{DuHuLiuWangDouble} and subsequently improved by
Fan, Li, Lin and Ning~\cite{FanLinDouble} yields
\[
        f_{K_4^4,K_5^4}(n)
        =
        O\bigl((\log\log n)^5\bigr).
\]
Thus, depending on the off-diagonal Ramsey growth of \(G\), both
polylogarithmic and iterated-logarithmic behaviours may occur. Determining
the precise scale for general pairs \((F,G)\) remains an important open
problem.

These observations motivate the search for structural parameters that
govern the general problem beyond clique-type pairs. Two natural notions
that arise in this context are tight connectivity and shadows. For
\(1\le k\le r-1\), an \(r\)-graph \(H\) is \emph{\(k\)-tightly connected}
if its edges can be ordered as \(e_1,\ldots,e_t\) so that, for every
\(i\ge2\), some \(j<i\) satisfies \(|e_i\cap e_j|\ge k\). The case
\(k=r-1\) is the usual notion of tight connectivity. A homomorphism from
\(G\) to \(F\) is a map \(\phi:V(G)\to V(F)\) such that
\(\phi(e)\in E(F)\) for every \(e\in E(G)\); we write \(G\not\to F\) if
no such map exists. The \(k\)-shadow \(\partial_kF\) is the \(k\)-graph
consisting of all \(k\)-sets contained in edges of \(F\). For
\(2\le k\le r-1\), define
\[
        \beta_F^{(k)}
        =
        \max_{\emptyset\ne P\subseteq\partial_kF}
        \frac{e(P)}{v(P)-1}.
\]
Here and throughout, the vertex set of a nonempty subgraph
\(P\subseteq\partial_kF\) is the union of its edges.

These notions are particularly natural in off-diagonal hypergraph Ramsey theory, where
connectivity often governs the transition between polynomial and superpolynomial growth. Conlon, Fox, Gunby, He, Mubayi, Suk, Verstra\"ete and
Yu~\cite{ConlonFoxGunbyHeMubayiSukVerstraeteYu} proved that every tightly
connected non-tripartite \(3\)-graph \(G\) satisfies
\(r(G,K_n^3)\ge2^{\Omega(n^{2/3})}\), equivalently
\(f_{K_3^3,G}(n)=O((\log n)^{3/2})\). This suggests that connectivity should
play a decisive role well beyond clique-type problems. He and
Nie~\cite{HN} developed this viewpoint for general pairs \(F,G\) and proved
the following polylogarithmic upper bound.

\begin{theorem}[He and Nie~\cite{HN}]\label{thm:HN}
Let \(r\ge3\), and let \(F\) and \(G\) be \(r\)-graphs. Suppose that
\(F\) is nonempty, \(G\) is \(2\)-tightly connected, and \(G\not\to F\).
Then there exists a constant \(C=C(F)\) such that, for all sufficiently
large \(n\),
\[
        f_{F,G}(n)\le C(\log n)^{\alpha_F},
        \qquad
        \alpha_F=
        \max_{\emptyset\ne P\subseteq\partial_2F}
        \frac{e(P)+1}{v(P)-1}.
\]
\end{theorem}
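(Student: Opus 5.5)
The plan is a random construction driven by a random colouring of pairs, in the style of Conlon--Fox--Gunby--He--Mubayi--Suk--Verstra\"ete--Yu. Set \(m=C(\log n)^{\alpha_F}\).

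\textbf{Construction.} Colour each pair of \([n]\) independently and uniformly with a colour from \(\partial_2F\), viewed as an injective labelling. Concretely, choose for each pair \(\{x,y\}\) a random ordered pair \((u,v)\) with \(uv\in\partial_2F\), meaning \(x\mapsto u\) and \(y\mapsto v\). An \(r\)-set \(S\) becomes an edge of \(H\) when its \(\binom r2\) pair-labels are consistent. Consistency means there is a map \(\phi:S\to V(F)\) inducing every pair label, and \(\phi(S)\in E(F)\).

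\textbf{\(H\) is \(G\)-free.} Take any copy of \(G\) in \(H\). For two edges sharing at least two vertices, the pair labels on the common pair force the two local maps \(\phi\) to agree on that pair. Indeed, each vertex's image is determined by any pair label containing it, and since the edges overlap in at least two vertices, the maps agree on all shared vertices. Now use \(2\)-tight connectivity and propagate along the edge ordering. This glues the local maps into a global \(\phi:V(G)\to V(F)\) mapping every edge to an edge of \(F\), which is a homomorphism and contradicts \(G\not\to F\).

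The subtle point is that a vertex could receive different images from pairs in different edges. This is handled because every vertex of an edge gets its image consistently within that edge, and overlapping edges agree on the shared pair. One must check that a vertex lying in two edges that are linked only through a chain still receives the same image. This needs care: the homomorphism may need to be defined via the chain, and different chains could conflict. To avoid this, I would instead label vertices-in-pairs so that consistency is forced pairwise. Alternatively, following He--Nie, I would assign each vertex a random colour from \(V(F)\) together with the pair colouring, which makes \(\phi\) global from the start.

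\textbf{No \(F\)-free set of size \(m\).} Fix \(X\) with \(|X|=m\). It suffices that \(H[X]\) contains a copy of \(F\) with high probability, then union bound over the \(\binom nm\le e^{m\log n}\) choices of \(X\). A copy of \(F\) on a vertex set in \(X\) appears when all \(e(\partial_2F)\) pairs get the prescribed labels, which has probability \(p^{e(\partial_2F)}\) with \(p=1/|\partial_2F|\) constant. Constant-probability events give far too much, so the real structure must rescale. Take a vertex-colouring by \(V(F)\) with part sizes chosen appropriately, and let pair labels be random only between the relevant parts with probability \(q=(\log n)^{-\gamma}\). Then apply Janson's inequality to the count of canonical copies of \(F\) in \(X\). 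The requirement is that \(\mu^2/\Delta\gg m\log n\). The dominant overlaps \(\Delta\) are indexed by subgraphs \(P\subseteq\partial_2F\), with \(\mu^2/\Delta\) roughly \(\min_P m^{v(P)}q^{e(P)}\). Requiring this to exceed \(m\log n\) for every \(P\) gives \(m^{v(P)-1}\ge q^{-e(P)}\log n\). Optimizing \(q\) against the \(G\)-freeness probability, which forces the extra \(+1\) in the numerator, yields \(m\approx(\log n)^{(e(P)+1)/(v(P)-1)}\) maximized over \(P\), that is, \(\alpha_F\).

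The main obstacle is this Janson computation together with getting the exponent exactly \(\alpha_F\). It requires matching the sparsification parameter with the union bound, and ensuring that a single edge \(P\) (with \(v(P)=2\)) is also handled.
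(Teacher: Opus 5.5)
\textbf{Gap: the construction.} Neither version you describe is both $G$-free and free of large $F$-free sets.

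In your main version, pairs get independent injective labels into edges of $\partial_2F$, and an $r$-set is an edge when its labels are consistent. Here the local maps really do fail to glue. ``Labelling vertices-in-pairs'' is exactly what this construction already does, so it is no remedy. Concretely, let $r=3$, let $F=K_3^3$ on $\{a,b,c\}$, and let $G$ have edges $\{0,1,2\},\{1,2,3\},\{2,3,4\},\{3,4,0\}$ (the tight $5$-cycle minus an edge).
\begin{itemize}
\item $G$ is tightly connected.
\item $G$ is not tripartite: a rainbow colouring with $0,1,2\mapsto a,b,c$ forces $3\mapsto a$ and $4\mapsto b$, and then $\{3,4,0\}$ is not rainbow.
\item The local bijections $(0,1,2)\mapsto(a,b,c)$, $(1,2,3)\mapsto(b,c,a)$, $(2,3,4)\mapsto(c,a,b)$, $(3,4,0)\mapsto(a,b,c)$ agree on the shared pairs $\{1,2\},\{2,3\},\{3,4\}$. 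So they induce a well-defined labelling of the nine pairs involved, under which every edge of $G$ is consistent. Yet vertex $0$ is sent to $a$ in the first edge and to $c$ in the last.
\end{itemize}
A fixed $5$-set receives this labelling with probability $6^{-9}$, so $H$ contains $G$ almost surely.

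Your fallback, a single global vertex colouring $\chi:[n]\to V(F)$, makes $G$-freeness automatic. The same holds for the vertex partition in your last paragraph. But then $H$ sits inside a blow-up of $F$, so some colour class is an independent set of size at least $n/v(F)$. No polylogarithmic bound can follow.

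\textbf{The missing idea.} The paper does not reprove this statement directly; it follows from the paper's main theorem with $k=2$, since $\beta_F^{(2)}\le\alpha_F$. The key is to use many independent vertex maps, selected by a colouring of the pairs.
\begin{itemize}
\item Colour the pairs of $[n]$ uniformly with $\ell=\lceil c_1\log n\rceil$ colours.
\item Independently for each colour $t$, choose a uniform map $\gamma_t:[n]\to V(F)$.
\item Let an $r$-set be an edge if and only if all its pairs receive one colour $t$ and $\gamma_t$ maps it onto an edge of $F$.
\end{itemize}
In a copy of $G$, two edges sharing a pair must have the same colour. So $2$-tight connectivity forces a single colour $t$ on the whole copy, and $\gamma_t$ is one global homomorphism $G\to F$. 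Hence $H$ is $G$-free deterministically; there is no ``$G$-freeness probability'' to optimize. Meanwhile no set is trapped in a colour class, because the $\ell$ maps are independent. The count of labelled transversal copies $(t,\mathbf{x})$, each of probability $\ell^{-e(\partial_2F)}v(F)^{-v(F)}$, is then controlled by Suen's inequality, including the $\delta$ term.

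Your heuristic $\mu^2/\Delta\approx\min_P m^{v(P)}q^{e(P)}$ misses the factor $\ell$ from summing over colours. Events of different colours are either independent or incompatible, since a shared pair cannot receive two colours. So an overlap on $j$ vertices spanning $h$ pairs of $\partial_2F$ gives $\mu^2/\Delta\gtrsim m^{j}\ell^{1-h}$.
\begin{itemize}
\item This factor is indispensable even for $\alpha_F$. Same-colour events sharing a single vertex, through $\gamma_t(x)$, give only $\mu^2/\Delta\approx m\ell$, which beats $m\log n$ only because $c_1$ is large.
\item For $h\ge1$ the condition becomes $m^{j-1}\gg\ell^{h}$, which gives the exponent $\beta_F^{(2)}$. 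So the $+1$ in $\alpha_F$ is not forced by anything.
\end{itemize}
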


For \(3\)-graphs, He and Nie obtained the sharper exponent
\(e(\partial_2F)/(v(F)-1)\) under an additional density assumption on
\(\partial_2F\), and conjectured that the assumption is unnecessary in the
following stronger form.

\begin{conjecture}[He and Nie~\cite{HN}]\label{conj:HN}
Let \(F\) and \(G\) be \(3\)-graphs, with \(F\) nonempty. If \(G\) is tightly connected and
\(G\not\to F\), then there exists a constant \(C\) such that
\[
        f_{F,G}(n)\le C(\log n)^{\beta_F^{(2)}}.
\]
\end{conjecture}

He and Nie also obtained a higher-shadow bound under a stronger
homomorphism obstruction formulated at the level of the \(k\)-shadows. Our
result takes a different direction: it retains only the ordinary
homomorphism obstruction \(G\not\to F\), as in Theorem~\ref{thm:HN}, while
allowing the connectivity level of \(G\) and the shadow level of \(F\) to
vary together. Thus the \(k\)-tight connectivity of \(G\) is matched with
the density parameter of the \(k\)-shadow \(\partial_kF\), without imposing
any stronger homomorphism obstruction. This leads to our main result.

\begin{theorem}\label{thm:main}
Let \(r\ge3\) and \(2\le k\le r-1\), and let \(F\) and \(G\) be
\(r\)-graphs. Suppose that \(F\) is nonempty, \(G\) is \(k\)-tightly
connected, and \(G\not\to F\). Then there exist
\(C=C(F,k)>0\) and \(n_0=n_0(F,k)\in\mathbb N\) such that, for every
\(n\ge n_0\),
\[
        f_{F,G}(n)\le C(\log n)^{\beta_F^{(k)}},
        \qquad
        \beta_F^{(k)}=
        \max_{\emptyset\ne P\subseteq\partial_kF}
        \frac{e(P)}{v(P)-1}.
\]
\end{theorem}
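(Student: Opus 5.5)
We construct a random \(G\)-free \(r\)-graph as a blow-up of \(F\) along a random \(k\)-graph colouring, following the strategy of He and Nie and of Conlon et al. Set \(v=v(F)\) and \(V(F)=[v]\). Take a vertex set \([n]\) and colour every \(k\)-subset \(S\) of \([n]\) independently and uniformly by an injective map \(\chi_S:S\to[v]\); equivalently, assign to each ordered \(k\)-tuple a labelling consistent under permutation. Define \(H\) on \([n]\) by putting an \(r\)-set \(e\) into \(H\) if and only if there is a map \(\phi:e\to V(F)\) with \(\phi(e)\in E(F)\) and \(\chi_S=\phi|_S\) for every \(k\)-subset \(S\subseteq e\). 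In other words, all \(\binom rk\) labels inside \(e\) must agree with one injective labelling of \(e\) onto an edge of \(F\).

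\(G\)-freeness is where \(k\)-tight connectivity enters. Suppose a copy of \(G\) is present and order its edges \(e_1,\dots,e_t\) so that each \(e_i\) meets an earlier \(e_j\) in at least \(k\) vertices. The labelling \(\phi_{e_i}\) agrees with \(\phi_{e_j}\) on \(e_i\cap e_j\), since both are determined by \(\chi_S\) for any \(k\)-set \(S\subseteq e_i\cap e_j\), and each vertex lies in some \(k\)-set. Hence for every vertex \(x\) the labels \(\phi_e(x)\), taken over edges \(e\ni x\), all coincide. This needs care: two edges containing \(x\) need not share a \(k\)-set, but for \(x\in e\) and any \(k\)-set \(S\ni x\) inside \(e\), the value \(\chi_S(x)=\phi_e(x)\). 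So the real check is that the labels of \(x\) are consistent across different \(k\)-sets containing \(x\). I plan to enforce this directly by colouring vertices: give each vertex a uniform label \(c(x)\in[v]\), and require \(\phi=c|_e\), with \(c\) injective on \(e\). Then \(c\) is a homomorphism \(G\to F\), a contradiction. With vertex labels alone, however, the construction loses randomness, so I keep both layers. Each \(k\)-set \(S\) receives an independent random bit (or a sparse random selection with probability \(p\)), and \(e\) is an edge iff \(c(e)\in E(F)\) and all its \(k\)-subsets are ``on''. This is \(G\)-free by the homomorphism argument, and it does not even need connectivity. So I must refine it: the role of connectivity is to allow a different labelling per component. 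I will therefore use an independent vertex labelling per ``tight component'' via a random \(k\)-graph \(\chi\). Concretely, each \(k\)-set gets a random injective label, and an edge requires all its \(k\)-subsets' labels to be restrictions of one map \(\phi_e\). Then tight \(k\)-connectivity propagates a consistent \(\phi\) across the whole copy of \(G\), because overlapping edges share a \(k\)-set and hence an identical labelling on it, which pins down the shared vertices. By induction along the ordering, the union of the \(\phi_{e_i}\) is a well-defined map, and it is a homomorphism \(G\to F\).

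For the upper bound on \(F\)-free sets, fix \(m=C(\log n)^{\beta}\) with \(\beta=\beta_F^{(k)}\), and bound the probability that a given \(m\)-set \(U\) induces an \(F\)-free subgraph. The plan is a Janson-type or container-style count. Consider homomorphic placements of \(F\) into \(U\) determined by injective maps \(\psi:V(F)\to U\). Such a copy appears when every \(k\)-set \(\psi(T)\), for \(T\in\partial_kF\), receives the label \(\psi^{-1}\); this has probability \(q^{e(\partial_kF)}\) with \(q\) a constant. To get polylogarithmic rather than polynomial behaviour, I would lower \(n\) relative to \(m\) by sparsifying. Instead of uniform labels, each \(k\)-set is labelled ``live'' with probability \(p\), and otherwise it kills all edges through it, with \(p\) tuned so that \(\Pr[U \text{ is } F\text{-free}]\le \exp(-c\,m\,\cdot\,\text{something})\). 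Then a union bound over \(\binom nm\le e^{m\log n}\) sets requires the exponent to exceed \(m\log n\). The extremal-subgraph structure of \(\beta\) appears through Janson's inequality. We have \(\mu\approx m^{v}p^{e(\partial_kF)}\), and \(\Delta\) is governed by overlapping copies sharing a subgraph \(P\subseteq\partial_kF\). Choosing \(p=m^{-1/\beta}\)-scale makes \(\mu^2/\Delta \gtrsim m\cdot\min_P m^{v(P)-1}p^{e(P)}\cdot m\). Solving \(\mu^2/\Delta\ge m\log n\) gives \(m\asymp(\log n)^{\beta}\), with the \(-1\) in \(v(P)-1\) arising because the union bound over \(m\)-sets costs \(m\log n\).

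I expect the Janson computation and the choice of \(p\) to be the main obstacle. The \(\Delta\)-term must be controlled uniformly over overlap patterns. The overlaps are \(k\)-shadow subgraphs \(P\), not vertex subsets, and shared \(k\)-sets must carry compatible labels. Here I would first try to reduce to counting pairs of copies whose shadows share exactly the \(k\)-graph \(P\), and bound the contribution by \(\mu^2 m^{-(v(P))}p^{-e(P)}\). I would also handle the case \(v(P)\) with \(P\) empty separately, since it forces only vertex overlap, which is absorbed by the labelling.
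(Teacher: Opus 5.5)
\paragraph{Verdict.} There is a genuine gap: the construction you end up with (a random injective label on each $k$-set, possibly sparsified) is not $G$-free in general, so the $G$-freeness step fails.

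\paragraph{Why the $G$-freeness argument fails.} Your induction along the tight ordering only gives agreement of $\phi_{e_i}$ with the one earlier edge $e_j$ satisfying $|e_i\cap e_j|\ge k$. If $e_i$ meets some other earlier edge in fewer than $k$ vertices, nothing forces the two labellings to agree on the common vertices. So the union of the $\phi_{e_i}$ need not be well defined. You flag exactly this issue yourself, and then reassert the claim.

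In effect, the label of a vertex $x$ is only forced to be constant on each $(k-1)$-tight component of its link. So your argument only gives $G$-freeness when the hypergraph obtained by splitting each vertex along these components has no homomorphism to $F$. That is strictly stronger than $G\not\to F$.

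\paragraph{A counterexample.} Take $r=3$, $k=2$, $F=K_3^3$, and let $G$ be the tight path $u_1u_2\cdots u_8$ with $u_8$ identified with $u_1=:x$, i.e.\ $C_7^3$ minus one edge.
\begin{itemize}
\item $G$ is tightly connected.
\item $G$ is not $3$-partite: rainbow edges along the path force $u_1,u_4,u_7$ to share a colour, but $u_7$ and $x$ lie in the edge $u_6u_7x$.
\item Each of the pairs $xu_2,xu_3,xu_6,xu_7$ lies in a single edge. So the labelling $u_i\mapsto i\bmod 3$, with $x\mapsto1$ on $xu_2,xu_3$ and $x\mapsto2$ on $xu_6,xu_7$, is consistent on all six edges.
\end{itemize}
Hence every placement of $G$ in $[n]$ lies in $H$ with probability at least $6^{-13}$: one factor $1/6$ for each of the $13$ pairs of $\partial_2G$, or $(p/6)^{13}$ after sparsification. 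Disjoint placements are independent, so $H\supseteq G$ with probability $1-o(1)$ unless $p$ is polynomially small in $n$, and that destroys the polylogarithmic bound. Relatedly, your model has no vertex variables, so nothing in your Janson computation actually forces the scale $p\approx1/\log n$ that you guess.

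\paragraph{The missing idea.} Let the $k$-sets carry only the \emph{choice} of labelling, not the labelling itself. The paper colours each $k$-set with one of $\ell=\lceil c_1\log n\rceil$ colours, and attaches to each colour $t$ one global random map $\gamma_t:[n]\to V(F)$. An $r$-set $X$ is an edge iff all its $k$-subsets share a colour $t$ and $\gamma_t(X)\in E(F)$.

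What propagates along the tight ordering is now a single symbol, for which agreement with the parent edge suffices. So every copy of $G$ is monochromatic of some colour $t$, and $\gamma_t$, being defined on all of $[n]$, restricts to a homomorphism $G\to F$. This is the rigorous form of your \emph{different labelling per component} intuition.

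\paragraph{What this costs in the probabilistic part.}
\begin{itemize}
\item The price is the vertex variables $\gamma_t(v)$. Same-colour copies sharing a vertex but no $k$-set are dependent and contribute $\Delta_{1,0}\lesssim\mu^2/(w\ell)$. This is precisely what forces $\ell\gtrsim\log n$.
\item For overlaps on $q$ vertices with $h>0$ common $k$-sets, one has $\Delta_{q,h}\lesssim\mu^2/(w\ell\cdot w^{q-1}\ell^{-h})$. Here $w^{q-1}\ell^{-h}\gtrsim1$ follows from $h\le\beta_F^{(k)}(q-1)$.
\item The events are equalities of multivalued variables, not up-sets in independent bits. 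So the paper uses Suen's inequality rather than Janson's. Its extra $\delta$-term is controlled using $(k-1)\beta_F^{(k)}>1$.
\end{itemize}
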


For \(k=2\), Theorem~\ref{thm:main} removes the additional \(+1\) in the
numerator of the exponent in Theorem~\ref{thm:HN}; in particular, when
\(r=3\), it proves Conjecture~\ref{conj:HN}. For general \(r\ge4\), the
theorem gives a higher-shadow extension under the ordinary homomorphism
obstruction \(G\not\to F\): the \(k\)-tight connectivity of \(G\) is matched
with the density parameter of \(\partial_kF\). A further feature of the
theorem is that the constant \(C\) depends only on \(F\) and \(k\), and hence
the estimate, including the threshold \(n_0\), is uniform over all
\(k\)-tightly connected \(r\)-graphs \(G\) satisfying \(G\not\to F\).

\begin{remark}\label{rem:optimization}
If \(G\) is \(k\)-tightly connected, then it is \(j\)-tightly connected for
every \(2\le j\le k\). Applying Theorem~\ref{thm:main} at each such level
therefore gives
\[
        f_{F,G}(n)
        \le
        C(\log n)^{\min_{2\le j\le k}\beta_F^{(j)}},
\]
where \(C\) may depend on \(F\) and \(k\). 
\end{remark}

At each fixed connectivity level, the theorem also yields a transparent
Ramsey consequence.

\begin{corollary}\label{app}
Let \(r\ge3\) and \(2\le k\le r-1\). If \(G\) is a \(k\)-tightly connected
\(r\)-graph which is not \(r\)-partite, then
\[
        r(G,K_n^r)
        \ge
        2^{\Omega\left(n^{(r-1)/\binom rk}\right)}.
\]
\end{corollary}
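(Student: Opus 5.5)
Corollary~\ref{app} follows from Theorem~\ref{thm:main} applied with \(F=K_r^r\), the single \(r\)-edge. The plan is to check the hypotheses, compute \(\beta^{(k)}_{K_r^r}\), and invert the resulting bound on \(f_{F,G}\).

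\emph{Hypotheses.} A homomorphism \(G\to K_r^r\) is a map \(V(G)\to[r]\) sending every edge of \(G\) onto all of \([r]\). This is exactly an \(r\)-partition of \(V(G)\) in which each edge meets every part once. So \(G\) is \(r\)-partite if and only if \(G\to K_r^r\). Since \(G\) is not \(r\)-partite, we have \(G\not\to K_r^r\). Also \(K_r^r\) is nonempty and \(G\) is \(k\)-tightly connected by assumption.

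\emph{The exponent.} We have \(\partial_k K_r^r=K_r^{(k)}\), the complete \(k\)-graph on \(r\) vertices. Take \(P\subseteq K_r^{(k)}\) with \(v(P)=v\le r\). Then \(e(P)\le\binom vk\), so
\[
\frac{e(P)}{v(P)-1}\le \frac{\binom vk}{v-1}.
\]
Now
\[
\frac{\binom{v+1}{k}/v}{\binom vk/(v-1)}=\frac{(v+1)(v-1)}{v(v+1-k)}\ge 1
\quad\text{for }k\ge2,
\]
because \(v^2-1\ge v^2+v-kv\) reduces to \((k-1)v\ge1\). Hence the ratio is nondecreasing in \(v\), and the maximum is attained at \(v=r\) with \(P\) complete. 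Therefore \(\beta^{(k)}_{K_r^r}=\binom rk/(r-1)\).

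\emph{Inversion.} We have \(f_{K_r^r,G}(N)\) equal to the minimum independence number over \(N\)-vertex \(G\)-free \(r\)-graphs. Theorem~\ref{thm:main} then gives, for \(N\ge n_0\), a \(G\)-free \(r\)-graph on \(N\) vertices with independence number at most \(C(\log N)^{\binom rk/(r-1)}\). Given large \(n\), choose
\[
N=\Bigl\lfloor 2^{c\,n^{(r-1)/\binom rk}}\Bigr\rfloor
\]
with \(c\) small enough that \(C(\log N)^{\binom rk/(r-1)}<n\). This host graph has no \(G\) and no independent set of size \(n\), so \(r(G,K_n^r)>N\). For small \(n\) the bound is trivial after adjusting the implied constant.

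Since \(C\) depends only on \(F=K_r^r\) and \(k\), the implied constant depends only on \(r\) and \(k\). The only nonroutine point is the maximization defining \(\beta^{(k)}\), and the monotonicity computation above handles it.
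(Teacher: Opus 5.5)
Your proposal is correct and follows the paper's proof essentially verbatim: apply Theorem~\ref{thm:main} with \(F=K_r^r\), identify \(G\not\to K_r^r\) with non-\(r\)-partiteness, compute \(\beta^{(k)}_{K_r^r}=\binom rk/(r-1)\) via the same ratio \(\frac{(v+1)(v-1)}{v(v+1-k)}\ge1\), and invert with \(N\) of order \(2^{c\,n^{(r-1)/\binom rk}}\). Your added remarks, that the implied constant depends only on \(r\) and \(k\) and that small \(n\) are handled by adjusting the constant, are accurate refinements of the same argument.
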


For \(k=2\), this gives \(r(G,K_n^r)\ge2^{\Omega(n^{2/r})}\); when \(r=3\),
it recovers the lower bound of Conlon et
al.~\cite{ConlonFoxGunbyHeMubayiSukVerstraeteYu}. At the other extreme,
taking \(k=r-1\) gives \(r(G,K_n^r)\ge2^{\Omega(n^{(r-1)/r})}\) for every
tightly connected non-\(r\)-partite \(G\).

\medskip
\noindent{\em Proof of Corollary~\ref{app}.}
Apply Theorem~\ref{thm:main} with \(F=K_r^r\). Since
\(\partial_kK_r^r=K_r^k\), it remains only to compute the corresponding
shadow-density parameter. If \(\emptyset\ne Q\subseteq K_r^k\) and
\(q=v(Q)\), then \(q\ge k\) and \(e(Q)\le\binom qk\). Moreover,
\[
        \frac{\binom{q+1}{k}/q}{\binom qk/(q-1)}
        =
        \frac{(q+1)(q-1)}{q(q+1-k)}
        \ge1,
\]
so the function \(q\mapsto\binom qk/(q-1)\) is increasing for \(q\ge k\).
Consequently,
\[
        \max_{\emptyset\ne Q\subseteq K_r^k}
        \frac{e(Q)}{v(Q)-1}
        =\frac{\binom rk}{r-1}.
\]
Theorem~\ref{thm:main} therefore gives
\(f_{K_r^r,G}(N)\le C(\log N)^{\binom rk/(r-1)}\). As
\(G\not\to K_r^r\) is equivalent to \(G\) being non-\(r\)-partite, choose
\[
        N=
        \left\lfloor
        \exp\left(cn^{(r-1)/\binom rk}\right)
        \right\rfloor,
\]
where \(c>0\) is sufficiently small. Then
\(f_{K_r^r,G}(N)<n\) for all sufficiently large \(n\), so there is a
\(G\)-free \(r\)-graph on \(N\) vertices with independence number less than
\(n\). Consequently,
\(r(G,K_n^r)>N\), and hence
\(r(G,K_n^r)\ge2^{\Omega(n^{(r-1)/\binom rk})}\).
\hfill\(\Box\)

\begin{remark}\label{rem:sharpness}
It is natural to ask whether \(\beta_F^{(k)}\) is best possible. For
\(F=K_3^3\) and \(k=2\), Theorem~\ref{thm:main} gives
\(f_{K_3^3,G}(n)=O((\log n)^{3/2})\) for every tightly connected
non-tripartite \(3\)-graph \(G\). This agrees with the exponent \(3/2\)
obtained by inverting the lower bound
\(r(G,K_n^3)\ge2^{\Omega(n^{2/3})}\) of Conlon et
al.~\cite{ConlonFoxGunbyHeMubayiSukVerstraeteYu}, uniformly over this class
of forbidden \(3\)-graphs. For a fixed \(G\), however, the general bound
need not be sharp: if \(K_4^{3-}\) is obtained from \(K_4^3\)
by deleting one edge, then Fox and He~\cite{FoxHeForbiddenLink} proved
\[
        f_{K_3^3,K_4^{3-}}(n)
        =\Theta\left(\frac{\log n}{\log\log n}\right),
\]
whereas Theorem~\ref{thm:main} gives \(O((\log n)^{3/2})\). Determining for
which pairs \((F,G)\) the exponent \(\beta_F^{(k)}\) is best possible remains
an interesting open problem.
\end{remark}

\section{Suen's inequality}

We use the following form of Suen's inequality, introduced by
Suen~\cite{Suen}; the formulation below is Theorem~3 of
Janson~\cite{JansonSuen}.

Let \(\{A_i\}_{i\in I}\) be a finite family of events on a common
probability space. A simple graph \(\Gamma\) on vertex set \(I\) is called a
\emph{dependency graph} for this family if the following condition holds:
whenever \(S,T\subseteq I\) are disjoint and there is no edge of \(\Gamma\)
between \(S\) and \(T\), the two collections of events
\(\{A_i:i\in S\}\) and \(\{A_j:j\in T\}\) are independent. We write
\(i\sim j\) when \(\{i,j\}\in E(\Gamma)\).

\begin{lemma}[Janson~\cite{JansonSuen}]\label{lem:suen-min}
Let \(\{A_i\}_{i\in I}\) be a finite family of events, and let \(\Gamma\)
be a dependency graph for this family. Put
\[
        X=\sum_{i\in I}{\bf 1}_{A_i},
        \qquad
        \mu=\mathbb E [X]=\sum_{i\in I}\mathbb P(A_i),
\]
and define
\[
        \Delta=
        \sum_{\substack{\{i,j\}\subseteq I\\ i\sim j}}
        \mathbb P(A_i\cap A_j),
        \qquad
        \delta=
        \max_{i\in I}
        \sum_{\substack{j\in I\\ j\sim i}}
        \mathbb P(A_j),
\]
where the sum defining \(\Delta\) is over unordered adjacent pairs of
distinct indices. We use the convention that
\(\mu^2/(8\Delta)=+\infty\) when \(\Delta=0\), and
\(\mu/(6\delta)=+\infty\) when \(\delta=0\). Then
\[
        \mathbb P(X=0)
        \le
        \exp\left(
        -\min\left\{
        \frac{\mu^2}{8\Delta},
        \frac{\mu}{6\delta},
        \frac{\mu}{2}
        \right\}
        \right).
\]
\end{lemma}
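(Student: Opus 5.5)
Since this is Janson's formulation~\cite{JansonSuen} of Suen's inequality~\cite{Suen}, I would reprove it in two stages. First I would prove the ``raw'' Suen bound
\[
\mathbb P(X=0)\le \exp\bigl(-\mu+\Delta e^{2\delta}\bigr),
\]
assuming also that every \(\mathbb P(A_i)\le 1/2\). Then I would deduce the stated minimum form by passing to a suitable subfamily. The passage uses the trivial monotonicity \(\mathbb P(X=0)\le\mathbb P(X_{I'}=0)\) for every \(I'\subseteq I\), where \(X_{I'}=\sum_{i\in I'}\mathbf 1_{A_i}\). A dependency graph for \(I\) restricts to one for \(I'\), and the parameters \(\Delta,\delta\) can only decrease under restriction.

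For the raw bound, order \(I=\{1,\dots,N\}\) and write \(B_S=\bigcap_{j\in S}A_j^c\). Then
\[
\mathbb P(X=0)=\prod_i\bigl(1-\mathbb P(A_i\mid B_{[i-1]})\bigr)\le\exp\Bigl(-\sum_i \mathbb P(A_i\mid B_{[i-1]})\Bigr),
\]
so it suffices to show
\[
\mathbb P(A_i\mid B_{[i-1]})\ge \mathbb P(A_i)-\sum_{j<i,\,j\sim i}\mathbb P(A_i\cap A_j)\,e^{2\delta}.
\]
Split \([i-1]\) into \(N_i=\{j<i:j\sim i\}\) and \(R_i=[i-1]\setminus N_i\). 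Then
\[
\mathbb P(A_i\cap B_{[i-1]})\ge \mathbb P(A_i\cap B_{R_i})-\sum_{j\in N_i}\mathbb P(A_i\cap A_j\cap B_{R_i}).
\]
The first term equals \(\mathbb P(A_i)\mathbb P(B_{R_i})\) by the dependency-graph property. For each \(j\in N_i\), drop from \(R_i\) also the neighbours of \(j\), leaving \(R_{ij}\). This gives
\[
\mathbb P(A_i\cap A_j\cap B_{R_i})\le \mathbb P(A_i\cap A_j)\,\mathbb P(B_{R_{ij}}).
\]
Dividing by \(\mathbb P(B_{[i-1]})\), it remains to bound the ratios \(\mathbb P(B_{R_{ij}})/\mathbb P(B_{[i-1]})\) by \(e^{2\delta}\). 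The set \([i-1]\setminus R_{ij}\) lies in the neighbourhoods of \(i\) and \(j\), so I would bound each ratio by a product of factors \((1-\mathbb P(A_k\mid\cdots))^{-1}\) over these neighbourhoods. By induction on \(i\), these conditional probabilities are controlled by \(\mathbb P(A_k)\) up to the same kind of correction. With \(\mathbb P(A_k)\le1/2\) and \(1-x\ge e^{-2x}\) on \([0,1/2]\), the product is at most \(e^{2\delta}\). This inductive control of the conditional ratios is the step I expect to be the main obstacle: it is where the precise constant \(e^{2\delta}\) comes from, and the induction must be set up over all subsets \(S\), not just initial segments.

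For the deduction, I would first dispose of the easy cases. If some \(\mathbb P(A_i)>1/2\), then \(\mathbb P(X=0)\le 1/2\le e^{-1/4}\cdot\)const; in that regime the claimed bound needs a separate handling via single events and \(\delta\ge\mathbb P(A_i)\) for neighbours, which I would check directly. In the main case, choose \(I'\) randomly, keeping each index independently with probability \(p\in(0,1]\). Then \(\mathbb E\mu'=p\mu\) and \(\mathbb E\Delta'=p^2\Delta\). I would control \(\delta'\) by choosing \(p\le 1/(3\delta)\) or so, so that the raw bound applied to \(I'\) is close to \(\exp(-p\mu+O(p^2\Delta))\). Since the raw exponent is an expectation over \(I'\), some deterministic \(I'\) attains at most it; if the maximum over neighbourhoods in \(I'\) is not controlled in expectation, I would instead delete a greedy subfamily to force \(\delta'\le p\delta\). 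Finally I would set
\[
p=\min\Bigl\{1,\ \frac{\mu}{4\Delta},\ \frac{1}{3\delta}\Bigr\}
\]
(with the conventions for \(\Delta=0\) or \(\delta=0\)), and check that \(p\mu-p^2\Delta e^{2p\delta}\) is at least \(\min\{\mu^2/(8\Delta),\mu/(6\delta),\mu/2\}\) in each of the three cases.
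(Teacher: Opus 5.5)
Your two-stage plan (raw bound, then reduction to a smaller family) is the right skeleton, but each stage has a genuine gap. The paper gives no proof of its own; it simply quotes Janson's Theorem~3.

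\textbf{First stage.} The per-step inequality you reduce to, $\mathbb P(A_i\mid B_{[i-1]})\ge \mathbb P(A_i)-e^{2\delta}\sum_{j<i,\,j\sim i}\mathbb P(A_i\cap A_j)$, is false, even when every $\mathbb P(A_k)\le 1/2$ and $\delta\le 1$.

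Here is a counterexample. Take independent $Z,E,G$ with $\mathbb P(Z=1)=1/2$, $\mathbb P(E)=1-\varepsilon$ and $\mathbb P(G)=g\le 1/2$. Let $A_1=\{Z=1\}$, $A_2=\{Z=0\}\cap E$, $A_3=E^c\cap G$ and $A_4=G$, with $\Gamma$ the path $1\sim2\sim3\sim4$. Then $B_{[3]}=\{Z=0\}\cap E^c\cap G^c$, so $\mathbb P(A_4\mid B_{[3]})=0$. The right-hand side, however, is at least $g(1-\varepsilon e^{2})>0$. Correspondingly, $\mathbb P(B_{R_{43}})/\mathbb P(B_{[3]})=1/(\varepsilon(1-g))$ is unbounded.

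The cause is that conditional probabilities are not controlled by unconditional ones: $\mathbb P(A_2\mid A_1^c)=1-\varepsilon$ although $\mathbb P(A_2)<1/2$. So $(1-\mathbb P(A_k\mid\cdots))^{-1}$ can blow up. The loss at step $4$ is paid for only by the abnormally small factor $\mathbb P(A_2^c\mid A_1^c)=\varepsilon$ at step $2$, so the deficit cannot be controlled step by step. Separately, a product over the neighbourhoods of both $i$ and $j$ with $(1-x)^{-1}\le e^{2x}$ would give $e^{4\delta}$, not $e^{2\delta}$.

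The remedy is never to divide. Put $\phi(S)=\mathbb P(B_S)$ and prove $\phi(S)\le\exp(-\mu_S+\Delta_S e^{2\delta})$ for \emph{every} $S\subseteq I$ by induction on $|S|$, keeping the global $\delta$. Take $i=\max S$, $S'=S\setminus\{i\}$, $N=\{j\in S':j\sim i\}$, $R=S'\setminus N$ and $R_j=\{k\in R:k\not\sim j\}$. Your decomposition together with $\phi(R)\ge\phi(S')$ gives
\[
\phi(S)\le\bigl(1-\mathbb P(A_i)\bigr)\phi(S')+\sum_{j\in N}\mathbb P(A_i\cap A_j)\,\phi(R_j).
\]
All coefficients are nonnegative, so you may insert the induction hypothesis as an \emph{upper} bound on $\phi(S')$ and on each $\phi(R_j)$. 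Since $S'\setminus R_j$ lies in the union of the neighbourhoods of $i$ and $j$, you get $\mu_{R_j}\ge\mu_{S'}-2\delta$; also $\Delta_{R_j}\le\Delta_{S'}$. Then $1+x\le e^x$ closes the induction. So $e^{2\delta}$ is really $e^{\mu_{S'}-\mu_{R_j}}$, and no assumption $\mathbb P(A_i)\le 1/2$ is needed. This also removes your unproved case ``some $\mathbb P(A_i)>1/2$''; there, $\mathbb P(X=0)\le 1/2$ says nothing about the claimed bound.

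\textbf{Second stage.} Passing to a subfamily cannot give $\delta_{I'}\le p\delta$, because it cannot shrink an individual $\mathbb P(A_k)$. If $\delta$ is carried by a few heavy neighbours, it survives unchanged in every $I'$ that keeps an adjacent pair. Averaging then only yields $\exp(-p\mu+p^2\Delta e^{2\delta})$. Greedy deletions lose constants and do not deliver the exact scaling that your final computation of $p\mu-p^2\Delta e^{2p\delta}$ presupposes.

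The missing idea is independent thinning. Let $J_i$ be mutually independent events of probability $p$, independent of all the $A_i$, and set $A_i'=A_i\cap J_i$. Then $\Gamma$ is still a dependency graph, and $\mu'=p\mu$, $\Delta'=p^2\Delta$, $\delta'=p\delta$ hold exactly. Since $X'\le X$, you get $\mathbb P(X=0)\le\exp(-p\mu+p^2\Delta e^{2p\delta})$.

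With this, your choice $p=\min\{1,\mu/(4\Delta),1/(3\delta)\}$ works. The bounds $p\Delta\le\mu/4$ and $2p\delta\le 2/3$ make the exponent at least $(1-e^{2/3}/4)\,p\mu\ge 0.51\,p\mu$. In the three respective cases this is at least $\mu/2$, $\mu^2/(8\Delta)$, or $\mu/(6\delta)$.
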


\section{Proof of the Main Theorem}

We briefly describe the proof of Theorem~\ref{thm:main}. The argument is
based on a random-template construction that links the \(k\)-tight
connectivity of \(G\) with the density structure of the \(k\)-shadow of
\(F\). We independently color the \(k\)-sets of \([n]\) and associate with
each color an independent random map from \([n]\) to \(V(F)\). An \(r\)-set
is declared to be an edge when all its \(k\)-subsets receive the same color
and the corresponding map sends it to an edge of \(F\). If the resulting
\(r\)-graph contained a copy of \(G\), then \(k\)-tight connectivity would
propagate a single color throughout that copy, and the associated map would
give a homomorphism \(G\to F\). Thus the construction is always \(G\)-free.

To rule out large induced \(F\)-free subgraphs, we count labelled
transversal copies of \(F\) inside each fixed vertex set. The nontrivial overlaps arising from shared \(k\)-set variables are
encoded by nonempty subgraphs \(Q\subseteq\partial_kF\), and the inequality
\(e(Q)\le\beta_F^{(k)}(v(Q)-1)\) gives precisely the estimates needed for
the dependency terms in Suen's inequality. Taking
\(w=C(\log n)^{\beta_F^{(k)}}\), we obtain that a fixed \(w\)-vertex set is
\(F\)-free with probability at most \(\exp(-\Omega(w\log n))\); a union
bound over all such sets then completes the proof.

\medskip
\noindent{\em Proof of Theorem~\ref{thm:main}.}
We may assume that \(F\) has no isolated vertices. Indeed, if \(F^{-}\) is
obtained from \(F\) by deleting all isolated vertices, then
\(\partial_kF=\partial_kF^{-}\) and
\(\beta_F^{(k)}=\beta_{F^{-}}^{(k)}\). Moreover, \(G\not\to F\) implies
\(G\not\to F^{-}\), while any copy of \(F^{-}\) inside a vertex set of size
at least \(v(F)\) extends to a copy of \(F\) by assigning arbitrary unused
vertices to the isolated vertices of \(F\). Since the parameter \(w\)
introduced below tends to infinity with \(n\), we may assume
\(w\ge v(F)\), so this reduction is valid for every vertex set considered
in the proof.

Write \(V(F)=\{a_1,\ldots,a_s\}\), \(P=\partial_kF\), and \(m=e(P)\).
Since \(F\) has no isolated vertices, the \(k\)-graph \(P\) has vertex set
\(V(F)\).

Fix a constant \(C_0>1\). All constants implicit in the estimates below
that depend only on \(F\) and \(k\) are regarded as fixed before the auxiliary
parameters are chosen. We then choose \(c_1\) and \(c_2\) successively:
first \(c_1\) sufficiently large in terms of \(F\), \(k\), and \(C_0\), and
then \(c_2\) sufficiently large in terms of \(F\), \(k\), \(C_0\), and
\(c_1\). Put
\[
        \ell=\lceil c_1\log n\rceil,
        \qquad
        w=\left\lceil c_2(\log n)^{\beta_F^{(k)}}\right\rceil.
\]
Throughout the proof, constants denoted by \(c_F\) and \(C_F\) may change
from line to line and depend only on \(F\) and \(k\). Since only finitely
many such constants occur, the above order of choices makes the hierarchy
of constants unambiguous.

We shall construct an \(n\)-vertex \(G\)-free \(r\)-graph \(H\) such that
every \(w\)-vertex subset of \(V(H)\) contains a copy of \(F\).

Randomly color every \(k\)-set in \(\binom{[n]}k\) by a color in
\([\ell]\), independently and uniformly. Denote the resulting coloring by
\[\beta:\binom{[n]}k\to[\ell].\] For every color \(t\in[\ell]\),
independently choose a random map 
\[
        \gamma_t:[n]\to V(F),
\] where each
vertex is mapped uniformly to one of the \(s\) vertices of \(F\).

Define an \(r\)-graph \(H\) on vertex set \([n]\) as follows. An \(r\)-set
\(X\subseteq[n]\) is an edge of \(H\) if and only if there exists a color
\(t\in[\ell]\) such that
\[
        \beta(S)=t
        \quad\text{for every }S\in\binom Xk,
        \qquad\text{and}\qquad
        \gamma_t(X)\in E(F).
\]

\begin{claim}\label{claim:H-is-G-free}
The hypergraph \(H\) is \(G\)-free.
\end{claim}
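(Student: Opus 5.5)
We argue by contradiction: suppose \(H\) contains a copy of \(G\), that is, an injective map \(\psi:V(G)\to[n]\) such that \(\psi(e)\in E(H)\) for every \(e\in E(G)\). The aim is to produce a homomorphism \(G\to F\), which contradicts \(G\not\to F\). If \(G\) has no edges, then \(G\not\to F\) fails trivially. We may therefore assume \(E(G)\ne\emptyset\).

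The first step is to attach a colour to each edge. For each \(e\in E(G)\) the image \(\psi(e)\) is an edge of \(H\). By the definition of \(H\), there is a colour \(t\) with \(\beta(S)=t\) for all \(S\in\binom{\psi(e)}k\). This colour is uniquely determined by \(e\), since it equals \(\beta\) of any one \(k\)-subset of \(\psi(e)\); and such subsets exist because \(k\le r-1<r\). Call this colour \(t(e)\).

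The second step uses the \(k\)-tight connectivity of \(G\) to show that all edges receive the same colour. Order the edges as \(e_1,\dots,e_t\) so that for each \(i\ge2\) there is some \(j<i\) with \(|e_i\cap e_j|\ge k\). Pick a \(k\)-set \(S\subseteq e_i\cap e_j\). Since \(\psi\) is injective, \(\psi(S)\) is a \(k\)-set contained in both \(\psi(e_i)\) and \(\psi(e_j)\), so \(t(e_i)=\beta(\psi(S))=t(e_j)\). By induction on \(i\), every edge has colour \(t^*:=t(e_1)\).

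The third step builds the homomorphism. For each \(e\), the defining condition of \(H\) with the colour \(t^*\) gives \(\gamma_{t^*}(\psi(e))\in E(F)\). Set \(\phi=\gamma_{t^*}\circ\psi:V(G)\to V(F)\). Then \(\phi(e)=\gamma_{t^*}(\psi(e))\in E(F)\) for every \(e\in E(G)\), so \(\phi\) is a homomorphism \(G\to F\), a contradiction.

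The only point needing care is the uniqueness of the colour witnessing each edge. This is what lets the \(F\)-edge condition be applied with the single common colour \(t^*\) rather than with some other witness. Uniqueness holds because all \(k\)-subsets of \(\psi(e)\) share the witness colour. The argument is deterministic, so the claim holds for every outcome of \(\beta\) and the \(\gamma_t\).
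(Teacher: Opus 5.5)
Your proposal is correct and follows essentially the same argument as the paper. You colour each edge of the copy by the unique common colour of its \(k\)-subsets, propagate a single colour \(t^*\) along the \(k\)-tight ordering, and take \(\gamma_{t^*}\circ\psi\) as the homomorphism \(G\to F\). Your added checks (injectivity of \(\psi\), uniqueness of the witness colour, and the edgeless case) are details the paper leaves implicit.
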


\noindent{\em Proof.}
Suppose, for a contradiction, that \(H\) contains a copy of \(G\). For each
edge \(e\) of this copy, let \(c(e)\) be the common color of all members of
\(\binom ek\); this color is uniquely determined. By the \(k\)-tight
connectivity of \(G\), the edges can be ordered as \(e_1,\ldots,e_t\) so
that, for every \(i\ge2\), some \(j<i\) satisfies
\(|e_i\cap e_j|\ge k\). Any \(k\)-set contained in \(e_i\cap e_j\) then
forces \(c(e_i)=c(e_j)\). Inductively, all edges of the copy have one common
color, say \(t\). Hence the restriction of \(\gamma_t\) to the vertex set of this copy maps
every edge of \(G\) to an edge of \(F\), giving a homomorphism \(G\to F\).
This contradicts \(G\not\to F\), so \(H\) is \(G\)-free.
\hfill$\Box$

\medskip
It remains to show that, with positive probability, every vertex set of
size \(w\) contains a copy of \(F\). Fix a set \(W\subseteq[n]\) with
\(|W|=w\). 
Choose and fix a balanced partition
\[
        W=W_1\cup\cdots\cup W_s
\]
such that
\[
        |W_i|\ge \frac{w}{2s}
        \qquad\text{for every } i\in[s],
\]
which is possible for all sufficiently large \(n\). This
partition is deterministic and is used only to count labelled transversal
copies of \(F\).

Let
\[
        \mathcal I_W=
        \left\{
        (t,\mathbf{x}):
        t\in[\ell],\
        \mathbf{x}=(x_1,\ldots,x_s)\in
        W_1\times\cdots\times W_s
        \right\}.
\]
For each \((t,\mathbf{x})\in\mathcal I_W\), let \(A_{t,\mathbf{x}}\) be
the event that \(\mathbf{x}\) realizes a labelled copy of \(F\) with
color \(t\), according to the fixed labelling
\(V(F)=\{a_1,\ldots,a_s\}\). That is, \(A_{t,\mathbf{x}}\) is the event that
\[
        \beta(\{x_i:i\in I\})=t
        \quad\text{for every } I\in \binom{[s]}{k}  \quad\text{with} \quad \{a_i:i\in I\}\in E(P),
\]
and
\[
        \gamma_t(x_i)=a_i
        \quad\text{for every } i\in[s].
\]

If \(A_{t,\mathbf{x}}\) occurs, then \(H[W]\) contains a copy of \(F\) on
\(\{x_1,\ldots,x_s\}\). Indeed, for every \(e\in E(F)\), all \(k\)-subsets
of \(e\) belong to \(P=\partial_kF\), and hence all \(k\)-subsets of the
corresponding \(r\)-set have color \(t\); moreover, \(\gamma_t\) maps this
\(r\)-set onto \(e\).
Put \[X_W=\sum_{(t,\mathbf{x})\in\mathcal I_W}{\bf 1}_{A_{t,\mathbf{x}}}.\]
Restricting to these labelled transversal copies is sufficient: if \(H[W]\)
contains no copy of \(F\), then none of the events \(A_{t,\mathbf{x}}\)
occurs. Hence the probability that \(H[W]\) is \(F\)-free is at most
\(\mathbb P(X_W=0)\).

The following estimate is the main probabilistic part of the proof.

\begin{claim}\label{claim:zero-bound}
For every fixed \(W\in\binom{[n]}w\), we have
\[
        \mathbb P(X_W=0)
        \le
        \exp(-C_0w\log n).
\]
\end{claim}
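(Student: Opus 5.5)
We apply Lemma~\ref{lem:suen-min} to the family \(\{A_{t,\mathbf{x}}\}_{(t,\mathbf{x})\in\mathcal I_W}\). We will show that each of the three quantities \(\mu/2\), \(\mu^2/(8\Delta)\) and \(\mu/(6\delta)\) is at least \(C_0w\log n\).

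\emph{Dependency graph and first moment.} Each event \(A_{t,\mathbf{x}}\) is determined by the colours \(\beta(S)\) of the \(m\) \(k\)-sets \(S=\{x_i:i\in I\}\) with \(\{a_i:i\in I\}\in E(P)\), together with the values \(\gamma_t(x_1),\dots,\gamma_t(x_s)\). Two events with disjoint sets of underlying variables are independent. So we take \(\Gamma\) to join \((t,\mathbf x)\) and \((t',\mathbf x')\) whenever they share a \(k\)-set variable, or \(t=t'\) and \(\mathbf x,\mathbf x'\) share a vertex. Because the parts \(W_i\) are disjoint, the \(x_i\) are distinct and each \(k\)-set variable is distinct. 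Hence \(\mathbb P(A_{t,\mathbf x})=\ell^{-m}s^{-s}\), and
\[
\mu\ge \ell\,(w/2s)^s\,\ell^{-m}s^{-s}=c_F\,w^s\ell^{1-m}.
\]
Since \(\beta_F^{(k)}\ge m/(s-1)\) (take \(P\) itself), we get \(w^{s-1}\ge c_2^{s-1}(\log n)^{m}\). This gives \(\mu\ge c_F c_2^{s-1}c_1^{1-m}\,w\log n\), which is at least \(C_0w\log n\) once \(c_2\) is large in terms of \(c_1\).

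\emph{Estimating \(\Delta\) and \(\delta\).} Classify each adjacent pair \((t,\mathbf x),(t',\mathbf x')\) by its overlap. Let \(J\) be the set of positions where the two tuples share vertices; because of the transversal structure, a shared vertex must occupy the same position \(i\) in both tuples. The shared \(k\)-set variables then form a subgraph \(Q\subseteq P\) on indices inside \(J\). There are two cases.

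\emph{Case 1: \(Q\) is nonempty.} Then \(t=t'\) is forced, since the shared \(k\)-sets must receive colour \(t\) and colour \(t'\) simultaneously. The \(\gamma_t\)-constraints on the shared vertices coincide. The joint probability is \(\ell^{-(2m-e(Q))}s^{-(2s-|J|)}\), and the number of such pairs is at most \(\ell\,w^{2s-|J|}\). Comparing with \(\mu^2\approx w^{2s}\ell^{2-2m}\) gives a ratio
\[
\Delta_Q/\mu^2\le C_F\,\ell^{e(Q)-1}w^{-|J|}\le C_F\,\ell^{e(Q)-1}w^{-v(Q)}.
\]
Using \(e(Q)\le\beta_F^{(k)}(v(Q)-1)\), we have \(w^{v(Q)-1}\ge c_2^{v(Q)-1}(\log n)^{e(Q)}\ge c_F c_2 c_1^{-e(Q)}\ell^{e(Q)}\). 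Therefore \(\Delta_Q/\mu^2\le C_F c_1^{e(Q)}/(c_2\,\ell\,w)\), so \(\mu^2/\Delta_Q\ge c_F(c_2/c_1^{m})\,c_1 w\log n\). This is large after choosing \(c_2\) large.

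\emph{Case 2: \(Q\) is empty.} Then the two events share no \(k\)-set, so adjacency requires \(t=t'\) with \(J\neq\emptyset\). Here the events are in fact positively correlated only through \(\gamma_t\). The joint probability is at most \(\ell^{-2m}s^{-(2s-|J|)}\), and the number of pairs is at most \(\ell w^{2s-|J|}\). This gives \(\Delta/\mu^2\le C_F\,\ell^{-1}w^{-1}\), i.e.\ \(\mu^2/\Delta\ge c_F\,\ell w\ge c_F c_1\,w\log n\). This requires \(c_1\) large in terms of \(C_0\), which is where \(c_1\) is chosen. Summing over the boundedly many overlap types keeps only an extra factor \(C_F\).

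The bound on \(\delta\) is handled the same way. Fixing \((t,\mathbf x)\), its neighbours of type \((J,Q)\) number at most \(C_F\,w^{s-|J|}\) (times \(\ell\) only if \(Q\) is empty, in which case \(t'=t\) anyway). Each has probability \(\ell^{-m}s^{-s}\). Thus \(\delta/\mu\le C_F\max_J w^{-|J|}\ell^{-1}\cdot\ell^{0}\) up to constants, i.e.\ \(\mu/\delta\ge c_F\,\ell w\), which is again fine.

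The main obstacle is the bookkeeping in Case~1. One must check that a shared \(k\)-set forces agreement of the colour and of the positions, so that the exponent of \(\ell\) saved is exactly \(e(Q)-1\) and the exponent of \(w\) lost is at least \(v(Q)\). This is precisely where the density condition \(e(Q)\le\beta_F^{(k)}(v(Q)-1)\) enters, and where the ordering \(c_1\) before \(c_2\) is needed. Combining the three bounds in Lemma~\ref{lem:suen-min} then yields \(\mathbb P(X_W=0)\le\exp(-C_0w\log n)\).
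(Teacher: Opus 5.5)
Your treatment of $\mu$ and of $\Delta$ is correct and matches the paper. Your Case~2 is the paper's $h=0$ case, and your Case~1 is its $h>0$ case, using $e(Q)\le\beta_F^{(k)}(v(Q)-1)$ and $v(Q)\ge k\ge 2$ in the same way.

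\textbf{The gap is in the estimate for $\delta$.} You have put the factor $\ell$ on the wrong case. When $Q=\emptyset$, adjacency forces $t'=t$, so there is no free colour. When $Q\neq\emptyset$, the two events share a colour variable $\beta(S)$, so they are adjacent in your own $\Gamma$ for \emph{every} colour $t'$. For $t'\ne t$ they are mutually exclusive, hence dependent, and cannot be dropped from $\Gamma$.

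In $\Delta$ these cross-colour pairs contribute $0$, which is why your Case~1 could assume $t=t'$. In $\delta$, however, you sum $\mathbb P(A_{t',\mathbf x'})=\ell^{-m}s^{-s}$ over all neighbours, whatever their colour. A shared $k$-set forces at least $k$ coordinates to coincide, so these neighbours contribute up to $C_F\,\ell\, w^{s-k}\cdot \ell^{-m}=C_Fw^{s-k}\ell^{1-m}$. Compared with $\mu\ge c_Fw^s\ell^{1-m}$, this term only yields $\mu/\delta\ge c_F w^k$, not $c_F\ell w$. Your count therefore does not justify $\mu/\delta\ge c_F\ell w$. That bound is in fact true, but only after an additional comparison that your argument skips.

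\textbf{The missing step} is the one the paper supplies: you need $w^k\ge 12C_0C_F\,w\log n$, i.e.\ $w^{k-1}\ge 12C_0C_F\log n$. This follows from $w\ge c_2(\log n)^{\beta_F^{(k)}}$ together with $(k-1)\beta_F^{(k)}\ge 1$ and $c_2$ large. A single edge of $\partial_kF$ already gives $\beta_F^{(k)}\ge 1/(k-1)$; the paper uses $K_r^k\subseteq\partial_kF$ to get the strict inequality $(k-1)\beta_F^{(k)}>1$.

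With this you get $\delta\le C_F\bigl(w^{s-1}\ell^{-m}+w^{s-k}\ell^{1-m}\bigr)$, and both terms are controlled by $\mu/(12C_0w\log n)$. The rest of your argument then goes through exactly as in the paper.
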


We first show how Claim~\ref{claim:zero-bound} completes the proof. By the
union bound, Claim~\ref{claim:zero-bound} gives
\[
\begin{aligned}
        \mathbb P\left(
        \exists W\in\binom{[n]}w:
        H[W]\text{ contains no copy of }F
        \right)
        &\le
        \binom nw \exp(-C_0w\log n)  \\
        &\le
        \exp\bigl(-(C_0-1)w\log n\bigr)
        <1,
\end{aligned}
\]
where the second inequality uses
\(\binom nw\le\exp(w\log n)\). Therefore, with positive probability, every
\(w\)-vertex subset of \(H\) contains a copy of \(F\). Since \(H\) is
\(G\)-free by Claim~\ref{claim:H-is-G-free}, this gives
\(f_{F,G}(n)\le w\le C(\log n)^{\beta_F^{(k)}}\), as desired. It
remains to prove Claim~\ref{claim:zero-bound}.

\medskip
\noindent{\em Proof of Claim~\ref{claim:zero-bound}.}
Let \[\mu=\mathbb E[X_W].\] 
For fixed \((t,\mathbf{x})\in\mathcal I_W\), the event
\(A_{t,\mathbf{x}}\) imposes \(m=e(P)\) independent \(k\)-set color
conditions and \(s\) independent vertex-label conditions; here the
underlying random variables are distinct because
\(x_1,\ldots,x_s\) lie in pairwise disjoint parts. 
Hence
\[
        \mathbb P(A_{t,\mathbf{x}})
        =
        \ell^{-m}s^{-s}.
\]
Moreover,
\[
        |\mathcal I_W|
        =
        \ell\prod_{i=1}^s |W_i|
        \ge
        \ell\left(\frac{w}{2s}\right)^s.
\]
 Therefore
\begin{equation}\label{eq:mu}
        \mu
        =
        \sum_{(t,\mathbf{x})\in\mathcal I_W}
        \mathbb P(A_{t,\mathbf{x}})
        \ge
        \ell\left(\frac{w}{2s}\right)^s
        \ell^{-m}s^{-s}
        =
        c_Fw^s\ell^{1-m}.
\end{equation}

Since \(P=\partial_kF\) itself is one of the subgraphs in the definition
of \(\beta_F^{(k)}\), we have \(\beta_F^{(k)}\ge m/(s-1)\). For all
sufficiently large \(n\), \(w\ge c_2(\log n)^{\beta_F^{(k)}}\) and
\(\ell\le 2c_1\log n\).
Since \(1-m\le0\), the upper bound on \(\ell\) gives a lower bound for
\(\ell^{1-m}\). Thus \eqref{eq:mu} implies
\[
\begin{aligned}
        \mu
        &\ge
        c_Fw\cdot w^{s-1}\ell^{1-m}  \\
        &\ge
        c_Fw
        \bigl(c_2(\log n)^{\beta_F^{(k)}}\bigr)^{s-1}
        (2c_1\log n)^{1-m}  \\
        &\ge
        c_F2^{1-m}c_1^{1-m}c_2^{s-1}w\log n.
\end{aligned}
\]
By choosing \(c_2\) sufficiently large in terms of \(c_1\), \(C_0\),
\(F\), and \(k\), we obtain
\begin{equation}\label{eq:mu-large}
        \mu\ge 2C_0w\log n.
\end{equation}

We now apply Lemma~\ref{lem:suen-min}. Define a graph on the events
\(A_{t,\mathbf{x}}\) by joining two events whenever they share an underlying
random variable. Since the base variables
\(\{\beta(S):S\in\binom{[n]}k\}\) and
\(\{\gamma_t(v):t\in[\ell],\ v\in[n]\}\) are mutually independent, this is
a dependency graph. To describe its adjacency relation explicitly, for
\(\mathbf{x}=(x_1,\ldots,x_s)\) and
\(\mathbf{y}=(y_1,\ldots,y_s)\), write
\[R(\mathbf{x},\mathbf{y})=\{a_i\in V(P):x_i=y_i\}.\]
Thus \(R(\mathbf{x},\mathbf{y})\) records the vertices of the labelled copy
of \(F\) on which the two transversal tuples coincide. Since the sets
\(W_1,\ldots,W_s\) are pairwise disjoint, two events
\(A_{t,\mathbf{x}}\) and \(A_{u,\mathbf{y}}\) use a common \(k\)-set
color variable if and only if
\(e(P[R(\mathbf{x},\mathbf{y})])>0\). Equivalently, there exists an edge
\(\{a_i:i\in I\}\in E(P)\) such that \(x_i=y_i\) for every \(i\in I\).
In particular, if \(e(P[R(\mathbf{x},\mathbf{y})])>0\) and \(t\ne u\),
then \(\mathbb P(A_{t,\mathbf{x}}\cap A_{u,\mathbf{y}})=0\), since a common
\(k\)-set would be required to receive two distinct colors.

Thus two events are adjacent precisely when they share either a \(k\)-set
color variable or a vertex-label variable. Equivalently,
\begin{equation}\label{eq:adjacency}
\begin{split}
        A_{t,\mathbf{x}}\sim A_{u,\mathbf{y}}
        \quad\Longleftrightarrow\quad&
        e\bigl(P[R(\mathbf{x},\mathbf{y})]\bigr)>0\\
        &\text{or}\quad
        \bigl(
        R(\mathbf{x},\mathbf{y})\ne\emptyset
        \text{ and }t=u
        \bigr).
\end{split}
\end{equation}

Put
\[
        \Delta=
        \sum_{\substack{
        \{(t,\mathbf{x}),(u,\mathbf{y})\}\subseteq\mathcal I_W\\
        A_{t,\mathbf{x}}\sim A_{u,\mathbf{y}}
        }}
        \mathbb P(
        A_{t,\mathbf{x}}\cap A_{u,\mathbf{y}}
        ),
\]
and
\[
        \delta=
        \max_{(t,\mathbf{x})\in\mathcal I_W}
        \sum_{\substack{
        (u,\mathbf{y})\in\mathcal I_W\\
        A_{t,\mathbf{x}}\sim A_{u,\mathbf{y}}
        }}
        \mathbb P(A_{u,\mathbf{y}}).
\]

We first estimate \(\Delta\). If two adjacent events have different colors,
then by \eqref{eq:adjacency} they must share a \(k\)-set color variable. In
that case, the same \(k\)-set is required to receive two different colors,
and hence \(\mathbb P(A_{t,\mathbf{x}}\cap A_{u,\mathbf{y}})=0\). Thus only
same-color pairs contribute positively to \(\Delta\).

For \(1\le q\le s\) and \(0\le h\le m\), let \(\Delta_{q,h}\) be the
total contribution to \(\Delta\) from unordered pairs
\(\{(t,\mathbf{x}),(t,\mathbf{y})\}\) 
such that
\[
        |R(\mathbf{x},\mathbf{y})|=q,
        \qquad
        e(P[R(\mathbf{x},\mathbf{y})])=h.
\]
For such a pair, there are at most
\(C_F\ell w^{2s-q}\) choices. The two events together impose \(2m-h\)
distinct \(k\)-set color conditions. Indeed, because the parts
\(W_1,\ldots,W_s\) are pairwise disjoint, a \(k\)-set color variable
occurs in both events precisely when the corresponding edge of \(P\) is
contained in \(R(\mathbf{x},\mathbf{y})\). The two events also impose
\(2s-q\) distinct vertex-label conditions, and therefore
\[
        \mathbb P(A_{t,\mathbf{x}}\cap A_{t,\mathbf{y}})
        =
        \ell^{-(2m-h)}s^{-(2s-q)}.
\]
Consequently,
\[\Delta_{q,h}\le C_Fw^{2s-q}\ell^{1-2m+h}.\] Using the lower bound
\(\mu\ge c_Fw^s\ell^{1-m}\) from \eqref{eq:mu}, we have
\(w^{2s}\ell^{2-2m}\le c_F^{-2}\mu^2\). Consequently,
\begin{equation}\label{eq:Delta-type}
\begin{aligned}
        \Delta_{q,h}
        &\le
        C_Fw^{2s-q}\ell^{1-2m+h} \\
        &=
        C_F
        \frac{w^{2s}\ell^{2-2m}}
        {w^q\ell^{1-h}} \\
        &\le
        C_F\frac{\mu^2}{w^q\ell^{1-h}},
\end{aligned}
\end{equation}
after adjusting the constant \(C_F\).

We next show that, by choosing \(c_1\) and then \(c_2\) sufficiently large,
we have
\begin{equation}\label{eq:key-ratio}
        w^q\ell^{1-h}
        \ge
        8s(m+1)C_0C_Fw\log n
\end{equation}
for every pair \((q,h)\) that can occur.

If \(h=0\), then adjacency with the same color implies
\(R(\mathbf{x},\mathbf{y})\ne\emptyset\), and hence \(q\ge1\). Therefore
\(w^q\ell^{1-h}=w^q\ell\ge w\ell\ge c_1w\log n\).
Thus \eqref{eq:key-ratio} holds in this case after choosing \(c_1\)
sufficiently large in terms of \(F\), \(k\), and \(C_0\).

Now suppose \(h>0\). Then \(q\ge k\), and the common \(k\)-sets form the
nonempty subgraph \(P[R(\mathbf{x},\mathbf{y})]\) of \(\partial_kF\).
After deleting its isolated vertices, we obtain a nonempty
\(Q\subseteq\partial_kF\) with \(e(Q)=h\) and \(v(Q)\le q\). Hence, by the
definition of \(\beta_F^{(k)}\),
\[
        h=e(Q)
        \le
        \beta_F^{(k)}(v(Q)-1)
        \le
        \beta_F^{(k)}(q-1).
\]
Using \(w\ge c_2(\log n)^{\beta_F^{(k)}}\) and
\(\ell\le2c_1\log n\), we get
\[
\begin{aligned}
        w^q\ell^{1-h}
        &=
        w\ell\frac{w^{q-1}}{\ell^h} \\
        &\ge
        w\ell
        \frac{c_2^{q-1}}{(2c_1)^h}
        (\log n)^{\beta_F^{(k)}(q-1)-h} \\
        &\ge
        \frac{c_2^{q-1}}{(2c_1)^h}w\ell.
\end{aligned}
\]
Since \(q\ge k\ge2\), choosing \(c_2\) sufficiently large in terms of
\(c_1\), \(F\), \(k\), and \(C_0\) gives \eqref{eq:key-ratio}.

There are at most \(s(m+1)\) possible pairs \((q,h)\). Summing
\eqref{eq:Delta-type} over all of them and using \eqref{eq:key-ratio}, we
obtain
\[
        \Delta
        =
        \sum_{q,h}\Delta_{q,h}
        \le
        \frac{s(m+1)C_F\mu^2}
        {8s(m+1)C_0C_Fw\log n}
        =
        \frac{\mu^2}{8C_0w\log n}.
\]
If \(\Delta=0\), the next estimate follows from the convention in
Lemma~\ref{lem:suen-min}. Otherwise, dividing the preceding bound by
\(8\Delta\) gives
\begin{equation}\label{eq:mu2Delta}
        \frac{\mu^2}{8\Delta}
        \ge
        C_0w\log n.
\end{equation}

We next estimate \(\delta\). Fix an event \(A_{t,\mathbf{x}}\). We split
its neighbours into same-color and different-color neighbours.

First consider same-color neighbours. These have the form
\(A_{t,\mathbf{y}}\) with \(R(\mathbf{x},\mathbf{y})\ne\emptyset\).
There are at most \(C_Fw^{s-1}\) choices for \(\mathbf{y}\), and each
event has probability \(\ell^{-m}s^{-s}\). Hence their total contribution
to \(\delta\) is at most \(C_Fw^{s-1}\ell^{-m}\).

Now consider different-color neighbours. If \(u\ne t\), then the random
variables \(\gamma_t(v)\) and \(\gamma_u(v)\) are independent, even for
the same vertex \(v\). Thus different-color adjacency can only arise from
a shared \(k\)-set color variable. Consequently, at least \(k\) labelled
vertices must coincide. There are at most \(C_F\ell w^{s-k}\) such
different-color neighbours, and each has probability \(\ell^{-m}s^{-s}\).
Hence their total contribution is at most \(C_Fw^{s-k}\ell^{1-m}\).
Therefore
\begin{equation}\label{eq:delta}
        \delta
        \le
        C_F\left(
        w^{s-1}\ell^{-m}
        +
        w^{s-k}\ell^{1-m}
        \right).
\end{equation}

It remains to compare these two terms with \(\mu\). Since \(F\) is a
nonempty \(r\)-graph, the \(k\)-shadow \(\partial_kF\) contains a copy of
\(K_r^k\), and hence \(\beta_F^{(k)}\ge\binom rk/(r-1)\). Since
\(2\le k\le r-1\), we have \(\binom rk\ge r\), so
\((k-1)\beta_F^{(k)}\ge(k-1)r/(r-1)>1\). It follows from
\(w\ge c_2(\log n)^{\beta_F^{(k)}}\) that
\(w^{k-1}/\log n\to\infty\) as \(n\to\infty\).
Therefore, by choosing \(c_1\) and \(c_2\) sufficiently large, we may
ensure that, for all sufficiently large \(n\),
\[
        c_Fw\ell
        \ge
        12C_0C_Fw\log n,
        \qquad
        c_Fw^k
        \ge
        12C_0C_Fw\log n.
\]
Together with \eqref{eq:mu}, this gives
\[
\begin{aligned}
        \mu
        \ge
        c_Fw\ell\cdot w^{s-1}\ell^{-m} \ge
        12C_0w\log n\cdot
        C_Fw^{s-1}\ell^{-m},
\end{aligned}
\]
and
\[
\begin{aligned}
        \mu
        \ge
        c_Fw^k\cdot w^{s-k}\ell^{1-m}\ge
        12C_0w\log n\cdot
        C_Fw^{s-k}\ell^{1-m}.
\end{aligned}
\]
If \(\delta=0\), the next estimate again follows from the convention in
Lemma~\ref{lem:suen-min}. Otherwise, using \eqref{eq:delta} and the two
preceding bounds gives
\begin{equation}\label{eq:mudelta}
        \frac{\mu}{6\delta}
        \ge
        C_0w\log n.
\end{equation}

Finally, \eqref{eq:mu-large}, \eqref{eq:mu2Delta}, and
\eqref{eq:mudelta} give
\[
        \frac{\mu^2}{8\Delta}
        \ge
        C_0w\log n,
        \qquad
        \frac{\mu}{6\delta}
        \ge
        C_0w\log n,
        \qquad
        \frac{\mu}{2}
        \ge
        C_0w\log n.
\]
By Lemma~\ref{lem:suen-min},
\[
        \mathbb P(X_W=0)
        \le
        \exp\left(
        -\min\left\{
        \frac{\mu^2}{8\Delta},
        \frac{\mu}{6\delta},
        \frac{\mu}{2}
        \right\}
        \right)
        \le
        \exp(-C_0w\log n).
\]
This proves Claim~\ref{claim:zero-bound} and completes the proof of
Theorem~\ref{thm:main}.
\hfill$\Box$

\end{document}